\newtheorem{theorem}{Theorem}[section]
\newtheorem{lemma}[theorem]{Lemma}
\newtheorem{proposition}[theorem]{Proposition}
\theoremstyle{definition}
\theoremstyle{remark}
\newtheorem{remark}[theorem]{Remark}
\numberwithin{equation}{section}
\newcommand{\Q}{{\mathbb{Q}}}
\newcommand{\Z}{{\mathbb{Z}}}
\newcommand{\cO}{{\mathcal{O}}}
\newcommand{\cS}{{\mathcal{S}}}
\DeclareMathOperator{\Sel}{Sel}
\DeclareMathOperator{\rank}{rank}
\DeclareMathOperator{\Jac}{Jac}
\begin{document}

\title[Covering techniques and rational points on some genus 5 curves]{Covering techniques and rational points\\ on some genus 5 curves}
\author{Enrique Gonz\'alez-Jim\'enez}
\address{Universidad Aut{\'o}noma de Madrid, Departamento de Matem{\'a}ticas and Instituto de Ciencias Matem{\'a}ticas (ICMat), 28049 Madrid, Spain}
\email{enrique.gonzalez.jimenez@uam.es}
\thanks{The author was supported in part by grant MTM2012--35849.}

\subjclass[2010]{Primary: 11G30; Secondary: 14H25,11B25, 11D25, 11D09}
\date{\today}

\keywords{rational points, genus 5 curve, covering collections, elliptic curve Chabauty, arithmetic progressions, Edwards curves, Weierstrass equation, $\Q$-derived polynomials, Pell equations}

\begin{abstract}
We describe a method that allows, under some hypotheses, computation of all the rational points of some genus $5$ curves defined over a number field. This method is used to solve some arithmetic problems that remained open.
\end{abstract}
\maketitle

\section{Introduction}
Several arithmetic problems are parametrized by the rational points of a curve over a number field $K$. In the cases where there are only squares involved, sometimes these curves may be written as the intersection of diagonal quadrics (only squares of the variables appear) in some projective space. The easiest case we are interested in is $C:aX_0^2+bX_1^2=X_2^2$, that represents a conic in $\mathbb P^3$. This case is well-understood and there are good algorithms that describe when there is a solution and, in that case, find them all. A next case is $C:\{aX_0^2+bX_1^2=X_2^2,cX_0^2+dX_1^2=X_3^2\}$, which represents a genus $1$ curve (if $ad-bc\ne 0$) in $\mathbb P^4$. Although, nowadays there is not a deterministic algorithm to determine if $C(K)$ is empty and/or to compute $C(K)$, it has been deeply studied. Finally, we have the case $C:\{aX_0^2+bX_1^2=X_2^2,cX_0^2+dX_1^2=X_3^2,eX_0^2+fX_1^2=X_4^2\}$. This curve is generically of genus $5$ and there are not known algorithms to compute $C(K)$. In this paper, our purpose is to give an algorithm to compute (under some hypotheses) $C(K)$. In fact, in section \ref{sec2}, we describe a more general algorithm to compute the rational points of some genus $5$ curve where the above curves are a particular case. This algorithm is based on some previous works with Xavier Xarles (for a single curve \cite{Gonzalez-Jimenez-Xarles2011} or for family of curves \cite{Gonzalez-Jimenez-Xarles2013b,Gonzalez-Jimenez2013}). 

In section \ref{sec3} we apply the algorithm described in section \ref{sec2} to some arithmetic problems that have remained open in the literature. At the end of the paper we include an appendix dedicated to quartic elliptic curves. There we show some results that will be useful for the use of the algorithm of section \ref{sec2}.

\section{An algorithm}\label{sec2}

Let $p_1,p_2$ be two coprime monic quartic separable polynomials with coefficients in a number field $K$. Consider the genus 5 curve $C$ defined in $\mathbb{A}^3$ by 
\begin{equation}\label{model4}
C\,:\,\{\,y_1^2=p_1(t)\,,\,y_2^2=p_2(t)\,\}.
\end{equation}
In this section we show an algorithm that allows (under some hypotheses) computation of $C(K)$. This method is based on the covering collections technique (cf. \cite{Coombes-Grant1989,Wetherell1997}) and the elliptic curve Chabauty method (cf. \cite{Flynn-Wetherell2001,Bruin2003}). 

Thanks to the shape of the curve $C$, it has two degree $2$ maps defined over $K$ to the genus $1$ curves given by the equations $F_i\,:\,y_i^2=p_i(t)$, for $i=1,2$. 

Now, consider a factorization of each polynomial $p_i(t)$ as product of two degree two polynomials $p_{i+}(t)$ and $p_{i-}(t)$ defined over an algebraic extension $L$ of $K$. Each factorization $p_i(t)=p_{i+}(t)p_{i-}(t)$ determines an unramified degree $2$ covering $\chi_i:F'_i\to F_i$ given by the curve
$$
F'_i\,:\,\{y_{i+}^2\!\!= p_{i+}(t)\,,\,y_{i-}^2\!\!= p_{i-}(t)\,\},
$$
and $\chi_i(t,y_{i+},\,y_{i-})=(t,y_{i+}y_{i-})$, for $i=1,2$. Thus, each covering corresponds to a degree $2$ isogeny $\phi_i:E_i'\to E_i$, where $E_i=\Jac(F_i)$ and $E_i'=\Jac(F_i')$.

Moreover, these factorizations together determine a Galois cover of $C$ with Galois group $(\Z/2\Z)^2$ that can be described as the curve in $\mathbb A^5$ given by
$$
D\,:\,\{y_{1+}^2\!\!= p_{1+}(t)\,,\,y_{1-}^2\!\!= p_{1-}(t)\,,\,y_{2+}^2\!\!= p_{2+}(t)\,,\,y_{2-}^2\!\!=  p_{2-}(t)\,\},
$$
which is a curve of genus $17$, along with the map $\chi:D\to C$ defined as
$$\chi(t,y_{1+},\,y_{1-},\,y_{2+},\,y_{2-})=(t,y_{1+}y_{1-},\,y_{2+}y_{2-}).$$
Now, for any pair $(\delta_1,\delta_2)\in K^2$ we define the twist $\chi^{(\delta_1,\delta_2)}:D^{(\delta_1,\delta_2)}\to C$ of the covering $\chi:D\to C$ by:
$$
D^{(\delta_1,\delta_2)}\,:\,\{\delta_1y_{1+}^2\!\!= p_{1+}(t)\,,\,\delta_1y_{1-}^2\!\!= p_{1-}(t)\,,\,\delta_2 y_{2+}^2\!\!= p_{2+}(t)\,,\,\delta_2 y_{2-}^2\!\!=  p_{2-}(t)\,\},
$$
and 
$$
\chi^{(\delta_1,\delta_2)}(t,y_{1+},\,y_{1-},\,y_{2+},\,y_{2-})=(t,\delta_1y_{1+}y_{1-},\,\delta_2 y_{2+}y_{2-}).
$$
Then, by a classical theorem of Chevalley and Weil \cite{Chevalley-Weil1932} we have
$$
C(K)\subseteq \bigcup_{\delta\in K^2} \chi^{\delta}(\{P\in D^{\delta}(L)\ : \ \chi^{\delta}(P)\in C(K)\}).
$$
Notice that only a finite number of twists have points locally everywhere, and these twists can be explicitly described. This finite set, that we denote by $\mathfrak{S} \subset (K^*)^2$, may be described, thanks to Proposition \ref{2cov}, in terms of a set $\mathcal{S}_L(\phi_i)$ of representatives in $L$ of the image of the Selmer groups of the degree $2$ isogenies $\phi_i:E_i'\to E_i$ in $L^*/(L^*)^2$ via the natural map, for $i=1,2$. That is, $\mathfrak{S} =\mathcal{S}_L(\phi_1)\times \mathcal{S}_L(\phi_2)$.

Once we have determined the finite set $\mathfrak{S} $, the next challenge is to compute all the points $P\in D^{\delta}(L)$ such that $\chi^{\delta}(P)\in C(K)$ for any $\delta\in \mathfrak{S}$. For this purpose, we are going to use the elliptic curve Chabauty method.  For $s=(s_1,s_2)\in\{\pm,\pm\}$ consider the quotient $\pi_s\,:\,D\to H_s$ where
$$
H_s\,:\, z^2=p_{1s_1}(t)p_{2s_2}(t)\quad\mbox{and}\quad \pi_s(t,y_{1+},\,y_{1-},\,y_{2+},\,y_{2-})=(t,y_{1s_1}y_{2s_2}).
$$
Then for any $\delta=(\delta_1,\delta_2)\in \mathfrak{S}$ we define $\pi^{\delta}_s\,:\,D\to H^{\delta}_s$ where
$$
H^{\delta}_s\,:\, \delta_1\delta_2 z^2=p_{1s_1}(t)p_{2s_2}(t)\quad\mbox{and}\quad \pi^{\delta}_s(t,y_{1+},\,y_{1-},\,y_{2+},\,y_{2-})=(x,y_{1s_1}y_{2s_2}).
$$
which, in fact, only depends on the product $\delta_1\delta_2$. Therefore, we can replace $\mathfrak{S}$ by:
$$ 
\mathfrak{S} =\{\delta_1\delta_2 \ : \ \delta_1\in{\cS_L}(\phi_{1}), \delta_2\in \cS_L(\phi_{2})\}.
$$

The following commutative diagram illustrates all the
curves and morphisms involved in our problem:
$$
\xymatrix@R=0.7pc@C=0.8pc{
&             &            &      \ar@/_2mm/[dddlll] D^{(\delta_1,\delta_2)} \ar@{->}[dd]    \ar@/^2mm/[dddrrr]   \ar@/^6mm/[ddrrrrrrr] &           &                           &      &       &     &   & \\
&                             &            &       &           &                           &      &    &        &   \\
&                     &            &   \ar@/_1mm/[ddl]     C  \ar@{->}[ddddd]    \ar@/^1mm/[ddr]                               &           &                           &   &  & & & H_s^{(\delta_1\delta_2)} \ar@/^14mm/[dddddlllllll] \\
F_1'^{(\delta_1)}\ar@{~}[dr] \ar@/^-10mm/[ddddrrr]  &                                           &            &       &           &                          &   \ar@{~}[dl]  F_2'^{(\delta_2)} \ar@/^10mm/[ddddlll]  &       &        & & \\
& F_1'  \ar@{->}[d]  \ar@{->}[r]&  F_1 \ar@{->}[d]   &                                              &  F_2  \ar@{->}[d] &\ar@{->}[l] F_2' \ar@{->}[d] &  &  &  &&   & \\
& E_1'  \ar@{->}[r]&  E_1    &                                              &  E_2   &\ar@{->}[l] E_2'  &      &   & & & &\\
& &     &                                            &    &  &      &   & & & &\\
& &     &              \mathbb P^1                                &    &  &      &   & & & &\\
}
$$
Notice that, in the diagram above, all the morphisms to $\mathbb P^1$ are given by the parame\-ter $t$.

We have obtained for a fixed $\delta\in\mathfrak{S}$ and for any $s\in \{(\pm,\pm)\}$:
$$
\{t \in \Q |\  \exists Y\in L^4 \mbox{ with } (t,Y)\in D^{(\delta)}(L)\} \subseteq  \{t \in \Q |\  \exists z \in
L \mbox{ with }  (t,z) \in
H_s^{\delta}(L)\}.
$$

Then the algorithm works out if we are able to compute for any $\delta \in\mathfrak{S}$, all the
points $(t,z) \in H_{s}^{\delta}(L)$ with $t\in \mathbb P^1(\Q)$ for some choice of the signs $s\in \{(\pm,\pm)\}$.  This computation can be done in two steps as follows:

\noindent ($\mbox{1}^{\mbox{st}}$) We must determine if $H_{s}^{\delta}(L)$ is empty.  Bruin and
Stoll \cite{Bruin-Stoll2009} developed a (non-deterministic) method to determine if this happens.

\noindent ($\mbox{2}^{\mbox{nd}}$) In the case that $H_{s}^{\delta}(L)$ is non-empty, we use the elliptic curve
Chabauty technique (cf. \cite{Bruin2003}). To do that we must  compute if the rank of the Mordell-Weil group of ${H_{s}^{\delta}}(L)$ is less than
the degree of $L$ over $\Q$. We also need to determine a subgroup of finite index of this group to
carry out the elliptic curve Chabauty method.

In practice, we consider only the case $K=\Q$ and $L$ a quadratic number field, because the computation of the Mordell-Weil group of an elliptic curve over a number field of higher degree is too expensive computationally. We have  implemented the algorithm in \verb|Magma|  \cite{magma}.
\subsection{Diagonal genus $5$ curves}\label{sec_diagonal5}
Let $K$ be a number field and $a,b,c,d,e,f\in K$. Denote by $C$ the intersection of the following three quadrics in $\mathbb P^4$:
\begin{equation}\label{model}
C\,:\,\left\{\begin{array}{c}aX_0^2+bX_1^2=X_2^2\\ cX_0^2+dX_1^2=X_3^2\\ eX_0^2+fX_1^2=X_4^2\end{array}\right\}.
\end{equation}
Suppose that the three quadratic forms (in the variables $X_0$ and $X_1$) defining each quadric are non-singular and non-proportional. Then $C$ is a (non-singular) genus $5$ curve (cf. \cite{Bremner1997}). Note that any non-hyperelliptic genus $5$ curve may be given (after the canonical map in $\mathbb{P}^4$ and Petri's Theorem) as the intersection of three quadrics. That is the reason why this kind of genus 5 curve will be called diagonal by us. Moreover, the jacobian of $C$ is $K$-isogenous to the product of the following five elliptic curves (cf. \cite{Bremner1997}) :
$$
\begin{array}{l}
 {E}_4\,:\, y^2\,=\,x(x+a d)( x+ c b), \\
 {E}_3\,:\, y^2\,=\,x(x+a f)( x+ e b),   \\
 {E}_2\,:\, y^2\,=\,x(x+c f)( x+ e d),  \\
 {E}_1\,:\, y^2\,=\,x(x-d(af-eb))(x-f(ad-cb)), \\
 {E}_0\,:\, y^2\,=\,x(x+c(af-eb))( x+ e(ad-cb)).  
 \end{array}
$$
Note that $E_i$ is the jacobian of the genus $1$ curve obtained by removing the variable $X_i$ from the equation of $C$. Moreover, the isogeny between $\Jac(C)$ and  $E_0\times\cdots\times E_4$ comes from the forgetful maps $\pi_i:C\to  E_i$. 

We associate to model (\ref{model}) of the curve $C$ the following two matrices:
$$
\mathcal{M}_C=\begin{pmatrix}1 & 0 & 0 & -a & -b  \\ 0 & 1 & 0 & -c & -d \\  0 & 0 & 1 & -e& -f\end{pmatrix}\quad\quad\mbox{and}\qquad\quad
\mathcal{R}_C=\begin{pmatrix}a & b \\c & d\\ e& f \end{pmatrix}.
$$
We call $\mathcal{M}_C$ (resp. $\mathcal{R}_C)$ the matrix (rep. reduced matrix) of the model  (\ref{model}). Notice that if we permute the columns of $\mathcal{M}_C$ then the echelon form of this new matrix give us a new matrix and a new reduced matrix of a new model of $C$ (as the intersection of three quadrics in $\mathbb P^4$). That is, there are ten ways to write the diagonal genus $5$ curve as the intersection of three diagonal quadrics in $\mathbb P^4$.

Let us give a new model of the diagonal genus $5$ curve similar to the one given by (\ref{model4}). Suppose that $[x_0:x_1:x_2:x_3:x_4]\in C(K)$. Then the techniques developed in section \ref{sec_diagona_genus1} allow us to determine two coprime monic quartic separable polynomials with coefficients in $K$ associated to the matrices:
$$
\mathcal{R}_3=\begin{pmatrix}a & b \\c & d \end{pmatrix}\quad\quad\mbox{and}\qquad\quad
\mathcal{R}_4=\begin{pmatrix}a & b \\ e& f \end{pmatrix}.
$$
That is, $p_3=p_{\mathcal{R}_3}$ and $p_4=p_{\mathcal{R}_4}$ (see equation (\ref{eq_quartic_diagonal_genus1}) in section \ref{sec_diagona_genus1}). These polynomials define the following new model of the diagonal genus $5$ curve $C$:
$$
C\,:\,\{\,y_3^2=p_3(t)\,,\,y_4^2=p_4(t)\,\}.
$$
The change of model is obtained by parametrizing the conic $aX_0^2+bX_1^2=X_2^2$  by the point $[x_0:x_1:x_2:x_3:x_4]$ and it is given by:
$$
[X_0:X_1:X_2:X_3:X_4]\longmapsto (t,y_3,y_4)=\left(\frac{b (x_1 X_2 - X_1 x_2) x_3^2}{x_0 X_2 - X_0 x_2},x_3X_3,x_4X_4\right).
$$
Moreover, for $i=3,4$, $E_i$ is the jacobian of the quartic genus $1$ curve defined by $y_i^2=p_i(t)$.

Now, to apply the algorithm described in section \ref{sec2} we need factorizations of the quartic polynomials $p_3$ and $p_4$. These have been given at section \ref{sec_diagona_genus1}. In particular, for $i\in\{3,4\}$, we have three factorizations $p_i(t)=p_{i,j_i,+}(t)p_{i,j_i,-}(t)$, $j_i\in\{1,2,3\}$, over the field $K(\alpha_{i,j_i})$ where:
$$
\begin{array}{lcclccl}
\alpha_{3,1}=\sqrt{-cd}\,, &&  \alpha_{3,2}=\sqrt{-c(ad-bc)}\,, && \alpha_{3,3}=\sqrt{d(ad-bc)}\,,\\
\alpha_{4,1}=\sqrt{-ef}\,,  && \alpha_{4,2}=\sqrt{-e(af-be)}\,, && \alpha_{4,3}=\sqrt{f(af-be)}\,.
\end{array}
$$
Each factorization $(i,j_i)$ corresponds to the following $2$-torsion point on $E_i(K)$: 
$$
\begin{array}{l}
P_{3,1}=(0,0),\qquad P_{3,2}=(-b\,c,0),\qquad P_{3,3}=(-a\,d,0)\,,\\
P_{4,1}=(0,0),\qquad P_{4,2}=(-b\,e,0),\qquad P_{4,3}=(-a\,f,0)\,.
\end{array}
$$
And each two torsion gives a $2$-isogeny $\phi_{i,j_i}:E_i\to E'_i$.  

Moreover, thanks to the shape of the diagonal genus $5$ curves, we have that the number of twists to be checked may be smaller than expected (see \cite[Lemma 16]{Gonzalez-Jimenez-Xarles2013b}). Let $\Upsilon$ be the group of automorphisms  of the curve $C$ generated by the automorphisms $\tau_i(X_i)=-X_i$ and
$\tau_i(X_j)=X_j$ if $i\ne j$, for $i=0,1,2,3,4$. Fix  $j_3,j_4\in\{1,2,3\}$. Consider $L=K(\alpha_{3,j_3},\alpha_{4,j_4})$ and denote by $\widetilde{\cS_L}(\phi_{3,j_3})$ a set of representatives of $\Sel(\phi_{3,j_3})$ modulo the subgroup generated by the image of the trivial points $[\pm x_0:\pm x_1:\pm x_2:\pm x_3:\pm x_4]$ in this Selmer group. Consider the subset $\widetilde{\mathfrak{S}} \subset K^*$ defined by 
$$ 
\widetilde{\mathfrak{S}} =\{\delta_3\delta_4 \ : \ \delta_3\in\widetilde{\cS_L}(\phi_{3,j_3}), \delta_4\in \cS_L(\phi_{4,j_4})\}.
$$

The method allows us to compute $C(K)$ if we are able to calculate, for
some choice of $j_3,j_4\in\{1,2,3\}$, and for any $\delta \in\widetilde{\mathfrak{S}}$, all the
points $(t,w) \in
H_{s}^{\delta}(K(\alpha_{1,j_1},\alpha_{2,j_2}))$ with $t\in\mathbb P^1(K)$ for some choice of the signs $s\in\{(\pm,\pm)\}$.

Hence we have $60$ possible choices of $\mathcal{R}_C$, $j_3$ and $j_4$, and we
need to find one of them where we can carry out these computations
for all the elements $\delta\in\widetilde{ \mathfrak{S}}$.

\section{Examples}\label{sec3}
In this section we are going to characterize the solutions of some arithmetic problems in terms of the rational points of some genus $5$ curves. Then we will solve these problems by computing all the rational points of such curves using the algorithm described in section \ref{sec2}.

\subsection{Arithmetic progressions on Pell equations}

Let $Y_n=a+(n-1) q$, $n=1,\dots,5$ with $a,q\in\Q$ be the $Y$-coordinates of the solutions $(X_n,Y_n)$ , $n=1,\dots,5$, to the Pell equation $X^2-dY^2 = m$. Then we say that  $(X_n,Y_n)$  (or just $Y_n$), $n=1,\dots,5$, are in arithmetic progression on the curve $X^2-dY^2 = m$. Following Pethö and Ziegler \cite{Petho-Ziegler2008}, one can obtain the system of $5$ equations:
$$
\begin{array}{c}
X_1^2-da^2=m\,,\quad X_2^2-d(a+q)^2=m\,,\quad X_3^2-d(a+2q)^2=m\,,\\[2mm] X_4^2-d(a+3q)^2=m\,,\qquad X_5^2-d(a+4q)^2=m\,.
\end{array}
$$
Eliminating $m$ we obtain an equivalent system of 4 equations:
$$
\begin{array}{ccc}
X_2^2-X_1^2 =dq(2a+q), & &	X_3^2-X_2^2 =dq(2a+3q),\\[2mm]
X_4^2-X_3^2 =dq(2a+5q), & &	X_5^2-X_4^2 =dq(2a+7q),
\end{array}
$$
and eliminating $d$:$$
C_{a,q}:\left\{
\begin{array}{l}
X_2^2(4a + 4q) = X_1^2(2a + 3q) + X_3^2(2a + q)\\[2mm]
X_3^2(4a + 8q) = X_2^2(2a + 5q) + X_4^2(2a + 3q)\\[2mm]
X_4^2(4a + 12q) = X_3^2(2a + 7q) + X_5^2(2a + 5q)
\end{array}
\right\}.
$$
Therefore the matrix corresponding to the variables $X_1^2,\dots,X_5^2$ is
$$
\widehat{\mathcal{M}}_{C_{a,q}}=
\begin{pmatrix}
2a+3q & -4(a+q) & 2a+q & 0 & 0\\
0 & 2a+5q & -4(a+2q) & 2a+3q & 0 \\
0 & 0 & 2a+7q & -4(a+3q) & 2a+5q
\end{pmatrix}.
$$
Notice that the points $[\pm 1:\pm 1:\pm 1:\pm 1:\pm 1]\in C(\Q)$ correspond to $(d,m)=(0,1)$.

Pethö and Ziegler \cite[\S 8. Open questions]{Petho-Ziegler2008} asked the following:

\

{\it \underline{Question}:} {\it \lq\lq Can one prove or disprove that there are $d$ and $m$ with $d > 0$ and not a perfect square such that $Y = 1, 3, 5, 7, 9$ are in arithmetic progression on the curve $X^2-dY^2 = m$?" }

\

In this section our target is to answer the question above. Then, if we are looking for $d$ and $m$ such that $Y = 1, 3, 5, 7, 9$ is an arithmetic progression on the curve $X^2-dY^2 = m$ then we have $a=1$ and $q=2$. In particular, it may be proved that $C:=C_{1,2}$ is a diagonal genus $5$ curve just computing the matrix associated to a model of the form (\ref{model}) coming from the matrix $\widehat{\mathcal{M}}_C$. That is: 
$$
\widehat{\mathcal{M}}_C=\begin{pmatrix}
8 & -12 & 4 & 0 & 0\\
0 & 12 & -20 & 8 & 0 \\
0 & 0 & 16 & -28 & 12
\end{pmatrix}
\begin{array}{c}
\mbox{\scriptsize $(3\,5)$}\\[-1.5mm]
\longrightarrow\\[-1.2mm]
\mbox{\tiny Echelon}
\end{array}
{\mathcal{M}_C\,\, \longrightarrow \,\,}
\mathcal{R}_C=
\begin{pmatrix}
-1 & 2\\
-2/3 & 5/3 \\
7/3 & -4/3
\end{pmatrix}.
$$
Now we apply the algorithm described in section \ref{sec_diagonal5}. First, we need to choose a pair $j_3,j_4\in\{1,2,3\}$ such that the field $L=\Q(\alpha_{3,j_3},\alpha_{4,j_4})$ is a quadratic field or $\Q$. The only possible case is $(j_3,j_4)=(1,2)$ where $L=\Q(\sqrt{10})$. Next , we obtain $\widetilde{\mathfrak{S}}=\{\pm 1,\pm 2,\pm 3,\pm 6\}$. Now for any $\delta \in\widetilde{\mathfrak{S}}$, we must compute all the points $(t,w) \in
H_s^{\delta}(\Q(\sqrt{10}))$ with $t\in \mathbb{P}^1(\Q)$ for some  $s\in\{(\pm,\pm)\}$. We have obtained that for any $\delta \in\widetilde{\mathfrak{S}}$ there exists $s\in\{(+,\pm)\}$ such that $\rank_{\Z}H^{\delta}_{s}(\Q(\sqrt{10}))=1$ therefore we can apply the elliptic curve Chabauty method to obtain the possible values of $t$. The following table shows all the data that we have computed. The absolute value of the coordinates of the point $P\in C(\Q)$ for the corresponding $t$ appears at the last column:
$$
\begin{array}{|c|c|c|c|c|c|}
\hline
\delta & \mbox{signs} & H^{\delta}_{\mbox{\tiny signs}}(L)=\emptyset? & \rank_{\Z}H^{\delta}_{\mbox{\tiny signs}}(L) & t & P\\
\hline
-1 & (+,-) & \mbox{no} & 1 & 2 & [1: 1: 1: 1: 1]\\
\hline
1 & (+,-) & \mbox{no} & 1 & \infty & [1: 1: 1: 1: 1]\\
\hline
2 & (+,-) & \mbox{no} & 1 & -1 & [1:3:5:7:9]\\
\hline
-2 & (+,-) & \mbox{no} & 1 & 4/3 &[1:3:5:7:9]\\
\hline
3 & (+,+) & \mbox{no} & 1 & -2 & [1:3:5:7:9]\\
\hline
-3 & (+,+) & \mbox{no} & 1 & 3/2 & [1:3:5:7:9]\\
\hline
6 & (+,+) & \mbox{no} & 1 & 0 & [1: 1: 1: 1: 1]\\
\hline
-6 & (+,+) & \mbox{no} & 1 & 1 & [1: 1: 1: 1: 1]\\
\hline
\end{array}
$$
Looking at the previous table, we obtain 
$$
C(\Q)=\{[\pm 1:\pm 1:\pm 1:\pm 1:\pm 1], [\pm 1:\pm 3:\pm 5:\pm 7:\pm 9]\}.
$$ 
The unique non-trivial solution is $[\pm 1:\pm 3:\pm 5:\pm 7:\pm 9]$ that corresponds to $d=1$ and $m=0$. Therefore we obtain:

\

{\it \underline{Answer}: If $m$ and $d$ are integers with $d$ not a perfect square, then $Y = 1, 3, 5, 7, 9$ cannot be in arithmetic progression on the curve $X^2-dY^2 = m$.}

\subsection{Arithmetic progressions on Edwards curves}
An Edwards curve is an elliptic curve given in the form $E_d\,:\,x^2+y^2=1+dx^2y^2$, for some $d\in\Q$, $d\ne 0,1$. Let $y_n\in\Q$ such that $(n,y_n)\in E_d(\Q)$ for $n=0,\pm 1,\pm 2,\pm 3,\pm 4$. Then we say that  $(n,y_n)$ (or just $n$), $n=0,\pm 1,\dots,\pm 4$, are in arithmetic progression on $E_d$. For any $d$ we have that $(\pm 1,0),(0,\pm 1)\in E_d(\Q)$ therefore $y_0=0,y_{\pm 1}=\pm 1$. We can assume $n>1$ since if $(x,y)\in E_d(\Q)$ then $(\pm x, \pm y),(\pm y,\pm x)\in E_d(\Q)$. Now, denote by 
$$
d_n=\frac{n^2+y_n^2-1}{n^2 y_n^2}=\frac{z_n^2(n^2-1)+1}{n^2}\qquad \mbox{with}\,\,z_n=\frac{\pm 1}{y_n}.
$$
The existence of $d\in \Q$, $d\ne 0,1$ such that there exist $y_n\in\Q$ with $(n,y_n)\in E_d(\Q)$ for $n=0,\pm 1,\pm 2,\pm 3,\pm 4$ is characterized by $d_2=d_3$ and $d_2=d_4$. That is, by the diagonal genus $1$ curve defined by:
$$E
\,:\,
\left\{
\begin{array}{l}
5+27 z_2^2-32 z_3^2 =0\\
1+4z_2^2-5 z_4^2=0
\end{array}\right\}.
$$
This elliptic curve has Cremona reference \verb|33600es2| and has rank $2$. Then Moody \cite{Moody2011} proved that there are infinitely many Edwards curves with $9$ points in arithmetic progression. Then Moody said:

\

{\it \underline{Moody}: We performed a computer search to find a rational point on the curve E, leading to an $E_d$ with points having $x$-coordinates $\pm 5$. Our search has not found such a rational point, thus it is an open problem to find an Edwards curve with an arithmetic progression of length $10$ or longer}.

\

 Our first objective in \cite{Gonzalez-Jimenez2013} was to prove that there does not exist a rational $d$ such that $0,\pm 1,\dots,\pm 5$ form an arithmetic progression in $E_d(\Q)$. This objective was completed\footnote{Recently, Bremner \cite{Bremner2013} has obtained the same result but with a different proof.}. Here we show the details. Note, that in the paper \cite{Gonzalez-Jimenez2013} we studied the general case of arithmetic progressions of the form $a,a+q,\dots$ for any $a,q\in\Q$ on Edwards curves.

Now we impose $(\pm 5,y_{\pm 5})\in {E_d}(\Q)$, for some $y_{\pm 5}\in \Q$. This implies adding the equality $d_2=d_5$ to the system of equations: $\{d_2=d_3\,,\,d_2=d_4\}$. Therefore we obtain the genus $5$ curve:
\begin{equation}\label{moody}
C\,:\,
\left\{
\begin{array}{l}
5+27 z_2^2-32 z_3^2 =0\\
1+4z_2^2-5 z_4^2=0\\
7+25z_2^2-32 z_5^2=0
\end{array}\right\}.
\end{equation}
If we homogenize the equations (\ref{moody}) then the matrix corresponding to the squares of the variables is $\widehat{\mathcal{M}}_C$ and we can prove that $C$ is a diagonal genus $5$ curve computing its associated reduce matrix $\mathcal{R}_C$:
$$
\widehat{\mathcal{M}}_C=\begin{pmatrix}
1 & 4 & 0 & -5 & 0\\
7 & 25 & 0 & 0 & -32 \\
2 & 0 & 25 & 0 & -27
\end{pmatrix}
\begin{array}{c}
\mbox{\scriptsize $(1\,4)(2\,5)$}\\[-1.5mm]
\longrightarrow\\[-1.2mm]
\mbox{\tiny Echelon}
\end{array}
{\mathcal{M}_C\,\, \longrightarrow \,\,}
\mathcal{R}_C=
\begin{pmatrix}
1/5 & 4/5\\
7/32 & 25/32 \\
5/32 & 27/32
\end{pmatrix}.
$$
Let $(j_3,j_4)=(2,1)$ then $L=\Q(\alpha_{3,j_3},\alpha_{4,j_4})=\Q(\sqrt{-15})$ and $\widetilde{\mathfrak{S}}=\{\pm 1,\pm 2,\pm 3,\pm 6\}$. Then the following table shows all the data necessary to compute $C(\Q)$:
$$
\begin{array}{|c|c|c|c|c|c|}
\hline
\delta & \mbox{signs} & H^{\delta}_{\mbox{\tiny signs}}(L)=\emptyset? & \rank_{\Z}H^{\delta}_{\mbox{\tiny signs}}(L) & t & P\\
\hline
-1 & (+,+) & \mbox{no} & 1 & 4/5 & [1: 1: 1: 1: 1]\\
\hline
1 & (+,+) & \mbox{no} & 1 & \infty & [1: 1: 1: 1: 1]\\
\hline
2 & (+,+) & \mbox{no} & 1 &  \infty & [1: 1: 1: 1: 1]\\
\hline
-2 & (+,+) & \mbox{no} & 1 &  \infty &[1: 1: 1: 1: 1]\\
\hline
3 & (+,+) & \mbox{no} & 1 &  \infty & [1: 1: 1: 1: 1]\\
\hline
-3 & (+,-) & \mbox{no} & 1 &  \infty & [1: 1: 1: 1: 1]\\
\hline
6 & (+,-) & \mbox{no} & 1 & 0 & [1: 1: 1: 1: 1]\\
\hline
-6 & (+,-) & \mbox{no} & 1 & -1/5 & [1: 1: 1: 1: 1]\\
\hline
\end{array}
$$
That is, we obtain: 
$$
C(\Q)=\{[\pm 1:\pm 1:\pm 1:\pm 1:\pm 1]\}.
$$ 

\

{\it \underline{Answer}: There is no $d\in\Q$, $d\ne 0,1$, such that $0,\pm1,\dots,\pm 5$ form an arithmetic progression on an Edwards curve $E_d$.}
\subsection{Arithmetic progressions on elliptic curves in Weierstrass form}
Let $E$ be an elliptic curve given by a Weierstrass equation
$$
E\,:\,y^2+a_1xy+a_3y=x^3+a_2x^2+a_4x+a_6, \qquad a_1,a_2,a_3,a_4,a_6\in\Q.
$$ 
A set of rational points $P_1,\dots,P_n\in E(\Q)$ is said to be an arithmetic progression on $E$ of length $n$ if the $x$-coordinates form an arithmetic progression. Note that any two Weierstrass equation for an elliptic curve are related by a linear change of variables with $x$-coordinate of the form $x=u^2x'+r$. Therefore, an arithmetic progression on an elliptic curve given by a Weierstrass equation is independent of the Weierstrass model chosen. Thus, without loss of generality, we can work with short Weierstrass equation:
$$
E\,:\,y^2=x^3+Ax+B, \qquad A,B\in\Q.
$$ 
Let $a,q,Y_n\in\Q$, $n=0,\pm 1,\pm 2$ such that $(a+n q,Y_n)\in {E}(\Q)$, $n=0,\pm 1,\pm 2$. Then we have 
$$
\begin{array}{l}
Y_2^2=(a+2 q)^3+A(a+2 q)+B,\\[1mm]
Y_1^2=(a+q)^3+A(a+ q)+B,\\[1mm]
Y_0^2=a^3+Aa+B,\\[1mm]
Y_{-1}^2=(a- q)^3+A(a- q)+B,\\[1mm]
Y_{-2}^2=(a-2 q)^3+A(a-2 q)+B.\\[1mm]
\end{array}
$$
Bremner \cite{Bremner1999} reduced the previous system of $5$ equations to the following quadric in $\mathbb P^4$:
$$
- R^2 +4 S^2 -6 T^2 +4 U^2=V^2,
$$
where
$$
{\tiny
\left\{
\begin{array}{l}
\!\!\!\!a=6(S^2-2T^2+U^2),\qquad q=6(R^2-3S^2+3T^2-U^2),\\
\!\!\!\!A=-36(R^4-9R^2S^2+21S^4+6R^2T^2-39S^2T^2+21T^4+R^2U^2+6S^2U^2-9T^2U^2+U^4),\\
\!\!\!\!B=216(R^4S^2-9R^2S^4+20S^6+4R^4T^2-12R^2S^2T^2-21S^4T^2+24R^2T^4-21S^2T^4\\
+20T^6+R^4U^2-8R^2S^2U^2+24S^4U^2-8R^2T^2U^2-12S^2T^2U^2-9T^4U^2+R^2U^4+4S^2U^4+T^2U^4).
\end{array}\right.
}$$
Bremner parametrizes the quadric above obtaining:
$$
\left\{
\begin{array}{l}
R = w^2 - 8 w x + 12 w y - 88 w z + 4 x^2 - 6 y^2 + 4 z^2,\\
S = -w^2 + 2 w x + 12 x y - 4 x^2 - 8 x z - 6 y^2 + 4 z^2,\\
T = -w^2 + 2 w y + 4 x^2 - 8 x y + 6 y^2 - 8 y z + 4 z^2,\\
U = -w^2 + 2 w z + 4 x^2 - 8 x z - 6 y^2 + 12 y z - 4 z^2,\\
V = -w^2 + 4 x^2 - 6 y^2 + 4 z^2.\\
\end{array}
\right.
$$
%
Now we impose $(a\pm 3 q,Y_{\pm 3})\in {E}(\Q)$, for some $Y_{\pm 3}\in \Q$. This implies:
\begin{equation}\label{bremner3}
4 R^2 - 6 S^2 + 4 T^2 - U^2=V_1^2\,,\qquad-4 R^2 + 15 S^2 - 20 T^2 + 10 U^2=V_2^2.
\end{equation}
The equations (\ref{bremner3}) define a variety $\mathcal V$ of dimension $3$ in $\mathbb P^5$. Elliptic curves on Weierstrass form over $\Q$ with $7$ points in arithmetic progressions are characterized by the rational point of a variety of dimension $3$, which is still an intractable problem nowadays. Nevertheless, Bremner noticed that if we intersect this variety with the one with equations $w=x$ and $z=0$, we obtain the solution to  (\ref{bremner3})  that gives:
$$
\begin{array}{l}
(a,q)=(0,6xy(x-y)(x-2y))\,,\\
(A,B)=x^2y^2(x-y)^2(x-2y)^2(-252,324(x^2-2xy+2y^2)^2).
\end{array}
$$
Now, with the restrictions above, we impose $(a\pm 4 q,Y_{\pm 4})\in {E}(\Q)$, for some $Y_{\pm 4}\in \Q$. This implies:
\begin{equation}\label{bremner4}
\left\{
\begin{array}{l}
z^2=x^4 + 20 x^3y - 64x^2y^2 + 40xy^3 + 4y^4\\
w^2=x^4 - 28 x^3y + 80 x^2y^2 - 56 xy^3 + 4y^4
\end{array}
\right\},
\end{equation}
for some $w,z\in \Q$. Bremner checked that each equation on (\ref{bremner4}) corresponds to the elliptic curve with Cremona reference \verb|840e2| that has rank $1$ and therefore he built a infinite family of elliptic curve on Weiersstrass form with $8$ points in arithmetic progression. Nevertheless, he could not prove if there are $9$ points in arithmetic progression in his family of elliptic curves. Then Bremner asserted:

\

{\it \underline{Bremner:} \lq \lq For nine points in the arithmetic progression, it is necessary to satisfy (\ref{bremner4}) simultaneously, and this corresponds to determining rational points on a curve of genus $5$. There are only finitely many such points, and it seems plausible that they are given by $\pm(x,y)=(1,0),(0,1),(1,1),(2,1)$ (each leading to degenerate progressions) but we are unable to verify this"}. 

\

Now, our objective in this section is to verify the previous assumption. Let us denote by $p_1(t)=t^4+20t^3-64t^2+40t+4$, $p_2(t)=t^4-28t^3+80t^2-56t+4$ and 
$$
C:\{z_1^2=p(t)\,,z_2^2=q(t)\}.
$$
Therefore to compute all solutions to (\ref{bremner4}) is equivalent to computing $C(\Q)$. Then we apply the algorithm from section \ref{sec2}. Both polynomials $p_1$ and $p_2$ factorize over the same quadratic fields: $\Q(\sqrt{30})$, $\Q(\sqrt{35})$, $\Q(\sqrt{42})$. Notice that $\delta=1$ always belongs to ${\mathfrak{S}}$. Let $L=\Q(\sqrt{D})$, for $D\in\{30,35,40\}$, then $\rank_{\Z}H^{1}_{(\pm,\pm)}(L)>1$. Therefore we can not apply the elliptic curve Chabauty method and our algorithm does not compute $C(\Q)$. Nevertheless, we can check that in fact $C$ is diagonal. We have the relations:
$$
\left\{
\begin{array}{l}
p(t) + q(t) =2 (2-2t+t^2)^2\\
7 p(t) + 5 q(t) =12(-2+t^2)^2\\
5 p(t) + 7 q(t)=12(2-4t+t^2)^2\\
\end{array}
\right\}.
$$
That is, another model for the curve $C$ is
$$
C\,:\,\left\{
\begin{array}{l}
z_1^2+z_2^2=2z_3^2\\
7z_1^2+5z_2^2=12z_4^2\\
5 z_1^2+7z_2^2=12 z_5^2
\end{array}
\right\}.
$$
Then we can apply the algorithm from section \ref{sec_diagonal5}. In our case we have 
$$
\widehat{\mathcal{M}}_C=\begin{pmatrix}
1 & 1 & -2 & 0 & 0\\
7 & 5 & 0 & -12 & 0 \\
5 & 7 & 0 & 0 & -12\end{pmatrix}
\begin{array}{c}
\mbox{\scriptsize $(2\,4\,3\,5)$}\\[-1.5mm]
\longrightarrow\\[-1.2mm]
\mbox{\tiny Echelon}
\end{array}
{\mathcal{M}_C\,\, \longrightarrow \,\,}
\mathcal{R}_C=
\begin{pmatrix}
-1 & 2\\
1/6 & 5/6 \\
-1/6 & 7/6
\end{pmatrix}.
$$
Let be $(j_3,j_4)=(3,1)$, then $L=\Q(\alpha_{3,j_3},\alpha_{4,j_4})=\Q(\sqrt{7})$  and  $\widetilde{\mathfrak{S}}=\{1,2,3,6\}$. Then the following table shows all the data necessary to compute $C(\Q)$ and the solutions of (\ref{bremner4}):
$$
\begin{array}{|c|c|c|c|c|c|c|}
\hline
\delta & \mbox{signs} & H^{\delta}_{\mbox{\tiny signs}}(L)=\emptyset? & \rank_{\Z}H^{\delta}_{\mbox{\tiny signs}}(L) & t & P & \pm (x,y)\\
\hline
1 & (+,-) & \mbox{no} & 1 & \begin{array}{c}1\\ \infty\end{array} & [1,1,1,1,1] & \begin{array}{c}(1,1)\\ (1,0)\end{array} \\
\hline
2 & (+,+) & \mbox{no} & 1 & \infty & [1,1,1,1,1] & (1,0)\\
\hline
3 & (+,+) & \mbox{yes} & - & - & - & -\\
\hline
6 & (+,+) & \mbox{no} & 1 &\begin{array}{c}0\\ 2\end{array} & [1,1,1,1,1] & \begin{array}{c}(0,1)\\ (2,1)\end{array}\\
\hline
\end{array}
$$
Looking at the previous table, we obtain 
$$
C(\Q)=\{[\pm 1,\pm 1,\pm 1,\pm 1,\pm 1]\},
$$ 
which allows us to prove the following:

\

{\it \underline{Fact}:} There are no nine points in arithmetic progression on the family of elliptic curves
$$
E\,:\,Y^2=X^3+AX+B,\,\,\,
\left\{
\begin{array}{l}
A=-252x^2y^2(x-y)^2(x-2y)^2,\\[2mm]
B=324x^2y^2(x-y)^2(x-2y)^2(x^2-2xy+2y^2)^2.
\end{array}
\right.
$$
\subsection{$\Q$-derived polynomials}
A univariate polynomial $p(x)\in\Q[x]$ is called $\Q$-derived if $p(x)$ and all its derivatives split completely over $\Q$ (i.e. all their roots belong to $\Q$). Note that if $q(x)$ is $\Q$-derived then for any $r,s,t\in\Q$, the polynomial $rq(sx+t)$ is $\Q$-derived too. Therefore a relation between $\Q$-derived polynomials is established: two $\Q$-derived polynomial $p(x)$ and $q(x)$ are equivalent if $q(x)=rp(sx+t)$ for some $r,s,t\in\Q$. Buchholz and MacDougall considered the problem to classifying all $\Q$-derived polynomials up to the above relationship in \cite{Buchholz-MacDougall2000}:

\

\noindent{\bf Conjecture. }{\em All $\Q$-derived polynomials are equivalent to one of the following:
$$
x^n, x^{n-1}(x-1),x(x-1)\!\!\left(x-\frac{v(v-2)}{v^2-1}\right)\!\!,x^2(x-1)\!\!\left(x-\frac{9(2w+z-12)(w+2)}{(z-w-18)(8w+z)}\right)
$$
for some $n\in\Z$, $v\in\Q$, $(w,z)\in E(\Q)$ where $E:z^2=w(w-6)(w+18)$.
}

\

A polynomial is of type $p_{m_1,\dots,m_r}$ if it has $r$ distinct roots and $m_i$ is the multiplicity of the $i$-th root. Buchholz and MacDougall  \cite{Buchholz-MacDougall2000} proved the above conjecture under the two hypotheses: non existence of  $\Q$-derived polynomials of type $p_{3,1,1}$ and $p_{1,1,1,1}$.  

\subsubsection{$\Q$-derived polynomials of type $p_{3,1,1}$} Let $q(x)$ be a $\Q$-derived polynomial of type $p_{3,1,1}$. Then without loss of generality we can assume that $q(x)=x^3(x-1)(x-a)$ for some $a\in \Q$ with $a\ne 0,1$. Moreover, the discriminants of the quadratic polynomials $q'''(x),q''(x)/x$ and $q'(x)/x^2$ are all squares over $\Q$ (cf. \cite[\S 2.3]{Buchholz-MacDougall2000}).  That is, there exist $b_1,b_2,b_3\in\Q$ such that
$$
b_1^2=4a^2-2a+4\,,\qquad b_2^2=9a^2-12a+9\,,\qquad b_3^2=4a^2-7a+4.
$$
Now, changing $a=(X-3)/(X+3)$ and $b_i=Y_i/(X+3)^3$ for $i=1,2,3$, we obtain the equivalent problem
\begin{equation}\label{flynn}
Y_1^2=6(X^2+15)\,,\qquad Y_2^2=6(X^2+45)\,,\qquad Y_3^2=X^2+135,
\end{equation}
where $X,Y_1,Y_2,Y_3\in \Q$. Flynn \cite{Flynn2001} proved that the unique solutions to (\ref{flynn}) are $(X,Y_1,Y_2,Y_3)=(\pm 3,\pm 12,\pm 18,\pm 12)$, proving that no polynomial of type $p_{3,1,1}$ is $\Q$-derived.

Here we give a different proof based on the algorithm described in section \ref{sec2}. Note that (\ref{flynn}) defines a diagonal genus $5$ curve $C$ with model of the form (\ref{model}) and associated matrix
$$
\mathcal{M}_C=\begin{pmatrix}1 & 0 & 0 & -6 & -90  \\ 0 & 1 & 0 & -6 & -270 \\  0 & 0 & 1 & -1& -135\end{pmatrix}
\begin{array}{c}
\mbox{\scriptsize $(1\, 2\, 5)$}\\[-1.5mm]
\longrightarrow\\[-1.2mm]
\mbox{\tiny Echelon}
\end{array}
\mathcal{R}_C=
\begin{pmatrix}
-1/45 & 1/270\\
4 & 1/3 \\
-2 & 1/2
\end{pmatrix}
$$
Let us apply the algorithm described in section \ref{sec_diagonal5}. In this case we have that $\Q(\alpha_{4,1})=\Q$; therefore for any choice of $j_3$ we have that $L=\Q(\alpha_{3,j_3},\alpha_{4,1})$ has degree less than or equal $2$. We use $j_3=3$ where $L=\Q(\sqrt{5})$ and $\widetilde{\mathfrak{S}}=\{1, 2, 3, 6\}$. The following table shows all the data necessary to compute $C(\Q)$:
$$
\begin{array}{|c|c|c|c|c|c|}
\hline
\delta & \mbox{signs} & H^{\delta}_{\mbox{\tiny signs}}(L)=\emptyset? & \rank_{\Z}H^{\delta}_{\mbox{\tiny signs}}(L) & t & P\\
\hline
1 & (+,+) & \mbox{no} & 1 & 0,\infty & [3,12,18,12,1]\\
\hline
2 & (+,-) & \mbox{no} & 1 & 8/15,16/5 & [3,12,18,12,1]\\
\hline
3 & (+,+) & \mbox{yes} & - & - & -\\
\hline
6 & (+,+) & \mbox{yes} & - & - & -\\
\hline
\end{array}
$$
Looking at the previous table, we obtain 
$$
C(\Q)=\{[\pm 3,\pm 12,\pm 18,\pm 12,\pm 1]\}.
$$ 
\subsubsection{$\Q$-derived polynomials of type $p_{1,1,1,1}$} In this case, with similar ideas as the previous case, it may be proved (cf. \cite[\S 2.2.3]{Buchholz-MacDougall2000}) that without loss of generality a polynomial of type $p_{1,1,1,1}$ can be assumed to be of the form
$$
p(x)=(x-1)(x-a)(x-b)\left(x-\frac{-a b}{a+b+ab}\right)
$$
with $a,b\in\Q$,  $a,b\ne 1$ and $a\ne b$. Furthermore, there must exist $z,w\in\Q$ such that 
\begin{equation}\label{surfaceQ}
z^2=r_4b^4-r_3b^3+r_2b^2+r_1b+r_0\,,\qquad w^2=s_4b^4-s_3b^3+s_2b^2+s_1b+s_0
\end{equation}
where 
$$
\begin{array}{lccl}
r_4=9a^2+18a+9, & & &s_4=9a^2+18a+9,\\
r_3=14a^3+10a^2+10a+14, & & & s_3=6a^3-6a^2-6a+6,\\
r_2=9a^4-10a^3-6a^2-10a+9,& & & s_2=9a^4+6a^3+18a^2+6a+9,\\
r_1=18a^4-10a^3-10a^2+18a, & & & s_1=18a^4+6a^3+6a^2+18a,\\
r_0=9a^4-14a^3+9a^2, & & & s_0=9a^4-6a^3+9a^2.
\end{array}
$$
That is, Buchholz and MacDougall  \cite[\S 2.2.3]{Buchholz-MacDougall2000} gave a characterization of $\Q$-derived polynomial of type $p_{1,1,1,1}$ in terms of rational points on the surface\footnote{A similar characterization has been done by Stroeker \cite{Stroeker2006}.} $S$  on $\mathbb{P}^4$ defined by (\ref{surfaceQ}). Note that $S$ could be considered as a genus $5$ curve over the field $\Q(a)$. Therefore, if we fix $a\in\Q$ and we denote by $S_a$ the corresponding genus $5$ curve, we may apply the algorithm described in section \ref{sec2} to compute $S_a(\Q)$.

\appendix
\section{On quartic elliptic curves}
\subsection{Rational points}
Let $q(t)$ be a monic quartic separable polynomial in $K[t]$. Then the equation $y^2=q(t)$ defines a genus $1$ curve, which we call $F$. The purpose of this section is to give a method that allows to compute the set of points $F(K)$. This method is Proposition 14 from \cite{Gonzalez-Jimenez-Xarles2013b}. We include its statement and proof (due to Xavier Xarles) for the sake of completeness:
\begin{proposition}\label{2cov}
Let $F$ be a genus $1$ curve over a number field $K$ given by a
quartic model of the form $y^2=q(t)$, where $q(t)$ is a monic quartic polynomial in $K[t]$. Thus, the curve $F$ has two
rational points at infinity, and we fix an isomorphism from $F$ to
its Jacobian $E=\Jac(F)$ defined by sending one of these points
at infinity to $\cO$, the zero point of $E$. Then:

(1) Any $2$-torsion point $P\in E(K)$
corresponds to a factorization $q(t)=q_1(t)q_2(t)$, where $q_1(t),q_2(t) \in L[t]$ quadratics and $L/K$ is an algebraic extension of degree at most $2$.

(2) Given such a $2$-torsion point $P$, the degree two
unramified covering $\chi:F'\rightarrow F$ corresponding to the degree two
isogeny $\phi:E'\to E$ determined by $P$ can be described as the
map from the curve $F'$ defined over $L$, with affine part in
$\mathbb{A}^3$ given by the equations $y_1^2=q_1(t)$ and
$y_2^2=q_2(t)$ and the map given by $\chi(t,y_1,y_2)=(t,y_1y_2)$.

(3) Given any degree two isogeny $\phi:E'\to E$, consider the
Selmer group $\Sel(\phi)$ as a subgroup of $K^*/(K^*)^2$. Let
$\cS_L(\phi)$ be a set of representatives in $L$ of the image
of $\Sel(\phi)$ in $L^*/(L^*)^2$ via the natural map. For any
$\delta\in \cS_L(\phi)$, define the curve $F'^{(\delta)}$ given
by the equations $\delta y_1^2=q_1(t)$ and $\delta y_2^2=q_2(t)$,
and the map to $F$ defined by
$\chi^{(\delta)}(t,y_1,y_2)=(t,y_1y_2\delta)$. Then
$$F(K) \subseteq \bigcup_{\delta \in \cS_L(\phi)} \chi^{(\delta)}(\{(t,y_1,y_2)\in F'^{(\delta)}(L)\ : \ t\in \mathbb{P}^1(K)\}).$$
\end{proposition}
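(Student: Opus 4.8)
The plan is to prove the three parts in order, since each relies on a standard piece of the theory of $2$-descent on elliptic curves, adapted to the quartic model $y^2 = q(t)$.

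For part (1), I would start from the fact that fixing an isomorphism $F \xrightarrow{\sim} E = \Jac(F)$ sending one point at infinity to $\cO$ identifies the $2$-torsion of $E$ with divisor classes of degree $0$ on $F$ that are $2$-torsion. On the quartic model, the nontrivial $2$-torsion classes are represented by differences of the form $(\text{pair of points with the same } t) - (\text{pair of points at infinity})$; concretely, a factorization $q(t) = q_1(t)q_2(t)$ into two monic quadratics over $\overline{K}$ picks out the pair of branch points of $y$ lying over the roots of $q_1$, and the class of that divisor minus the divisor at infinity is a $2$-torsion point. Conversely every nontrivial $2$-torsion point arises this way. The key point is that the Galois action on the three nontrivial $2$-torsion points matches the Galois action on the three ways of partitioning the four roots of $q$ into two pairs, so a $K$-rational $2$-torsion point corresponds to a partition stable under $\mathrm{Gal}(\overline{K}/K)$, i.e.\ to a factorization $q = q_1 q_2$ with $q_1, q_2 \in L[t]$ for a field $L$ with $[L:K]\le 2$ (either $L = K$, or $L$ is the quadratic field swapping $q_1$ and $q_2$). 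I would cite the classical correspondence between factorizations of the quartic and $2$-torsion (as in the references already invoked) rather than redo the divisor-class computation in full.

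For part (2), the curve $F' : \{y_1^2 = q_1(t),\, y_2^2 = q_2(t)\}$ with $\chi(t,y_1,y_2) = (t, y_1 y_2)$ is manifestly a degree $2$ cover of $F$ (forget the sign of $y_1/y_2$), and one checks it is unramified: ramification of $\chi$ could only occur over points where $y_1 y_2 = 0$, i.e.\ over the roots of $q_1$ or $q_2$, but at a root of $q_1$ the coordinate $y_2$ is a nonvanishing local parameter upstairs mapping isomorphically, and symmetrically at roots of $q_2$, while over the points at infinity one checks in a local chart that the two sheets stay separated. Hence $F'$ is a genus $1$ curve (Riemann--Hurwitz with no ramification forces $2g(F')-2 = 2(2g(F)-2)$, giving $g(F')=1$) and $\chi$ is an étale double cover; after choosing base points it becomes an isogeny $\phi: E' \to E$ of degree $2$, and its kernel is generated by the image of the $2$-torsion point $P$ attached to the factorization, because the cover $\chi$ is exactly the one that trivializes the class of $P$ (pulling $q_i$ back to a square). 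I expect this to be the most delicate step to write carefully — the local analysis at the roots of $q_1,q_2$ and at infinity showing unramifiedness — though none of it is deep.

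For part (3), this is the descent step and follows formally once (1) and (2) are in place. Given $\phi: E'\to E$, the descent exact sequence gives an injection $E(K)/\phi(E'(K)) \hookrightarrow \Sel(\phi) \subseteq K^*/(K^*)^2$, and the fibre of the cover $\chi$ over a point is a torsor under $\ker\phi$; twisting $\chi$ by $\delta \in K^*$ produces the cover $\chi^{(\delta)}: F'^{(\delta)} \to F$ with $F'^{(\delta)}: \{\delta y_1^2 = q_1(t),\, \delta y_2^2 = q_2(t)\}$, and $\chi^{(\delta)}$ has an $L$-point above a given $P \in F(K)$ precisely when $\delta$ maps to the class attached to $P$ by the connecting map (base-changed to $L$). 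Therefore every $P \in F(K)$ lifts to an $L$-point of $F'^{(\delta)}$ for some $\delta$ whose class in $K^*/(K^*)^2$ lies in the image of the connecting map, hence in $\Sel(\phi)$; taking $\delta$ to run over the representative set $\cS_L(\phi)$ in $L$ and noting that the lifted point has the same $t$-coordinate, which therefore lies in $\mathbb{P}^1(K)$, gives exactly the claimed inclusion
$$
F(K) \subseteq \bigcup_{\delta \in \cS_L(\phi)} \chi^{(\delta)}\bigl(\{(t,y_1,y_2)\in F'^{(\delta)}(L)\ :\ t\in \mathbb{P}^1(K)\}\bigr).
$$
The finiteness and effectivity of $\Sel(\phi)$ — hence of the union — is the standard finiteness of Selmer groups and needs only to be quoted.
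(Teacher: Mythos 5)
Your proposal is correct and follows essentially the same route as the paper's proof: the cover attached to a factorization $q=q_1q_2$ is an unramified double cover yielding the $2$-isogeny, rationality of the $2$-torsion point is transferred to the field of definition of the factorization by Galois descent, and part (3) is the standard descent over the Selmer twists pushed into $L^*/(L^*)^2$. The only cosmetic difference is in (3), where the paper identifies $F'^{(\delta)}$ with the homogeneous space attached to $\delta$ via a uniqueness characterization (citing Cohen) and treats the case $L\ne K$ through the commutative square $\Sel(\phi)\to\Sel(\phi_L)$ over $K^*/(K^*)^2\to L^*/(L^*)^2$, whereas you run the connecting-homomorphism argument directly; these are the same computation.
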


\begin{proof}(Xarles)
First we prove (1) and (2). Suppose we have such a factorization
$q(t)=q_1(t)\,q_2(t)$ over some extension $L/K$, with $q_1(t)$ and
$q_2(t)$ monic quadratic polynomials. Then the covering
$\chi:F'\rightarrow F$ from the curve $F'$ defined over $L$, with
affine part in $\mathbb{A}^3$ given by the equations $y_1^2=q_1(t)$ and $y_2^2=q_2(t)$ and the map given by
$\chi(t,y_1,y_2)=(t,y_1y_2)$, is an unramified degree two
covering. So $F'$ is a genus 1 curve, and clearly it contains the
preimage of the two points at infinity, which are rational over
$L$, hence it is isomorphic to an elliptic curve $E'$. Choosing
such isomorphism by sending one of the preimages of the fixed
point at infinity to $\cO$, we obtain a degree two isogeny $E'\to E$,
which corresponds to a choice of a two torsion point.

So, if the polynomial $q(t)$ decomposes completely in $K$, the
assertions (1) and (2) are clear since the number
of decompositions $q(t)=q_1(t)\,q_2(t)$ as above is equal to the
number of points of exact order 2. Now the general case is proved
by Galois descent: a two torsion point $P$ of $E$ is defined over
$K$ if and only if the degree two isogeny $E'\to E$ is defined
over $K$, so if and only if the corresponding curve $F'$ is
defined over $K$. Hence the polynomials $q_1(t)$ and $q_2(t)$
should be defined over an extension of $L$ of degree $\le 2$, and in
case they are not defined over $K$, the polynomials $q_1(t)$ and
$q_2(t)$ should be Galois conjugate over $K$.

Now we show the last assertion. First, notice that the curves
$F'^{(\delta)}$ are twisted forms (or principal homogeneous
spaces) of $F'$, and it becomes isomorphic to $F'$ over the
quadratic extension of $L$ adjoining the square root of $\delta$.

Consider the case where $L=K$. So $F'$ is defined over $K$. For any
$\delta\in \Sel(\phi)$, consider the associated homogeneous space
$D^{(\delta)}$; it is a curve of genus $1$ along with a degree
$2$ map $\phi^{(\delta)}$ to $E$, without points in any local
completion, and isomorphic to $E'$ (and compatible with $\phi$)
over the quadratic extension $K(\sqrt{\delta})$. Moreover, it is
determined by such properties (see \cite[\S 8.2]{Cohen239}). So,
by this uniqueness, it must be isomorphic to $F'^{(\delta)}$
along with $\chi^{(\delta)}$. The last assertion also is clear
from the definition of the Selmer group.

Now, the case $L\ne K$. The assertion is proved just
observing that the commutativity of the diagram
$$\xymatrix{
  \Sel(\phi) \ar@{->}[r]    \ar@{->}[d]& \Sel(\phi_L)   \ar@{->}[d]\\
   K^*/(K^*)^2 \ar@{->}[r]& L^*/(L^*)^2 
  }
$$
where the map $\Sel(\phi)\to \Sel(\phi_L)$ is the one sending the
corresponding homogeneous space to its base change to $L$.
\end{proof}

\subsection{A Galois theory exercise on quartic polynomials}\label{sec_fact}

We show an algorithm to factorize a quartic polynomial as a product of two quadratic polynomials over an extension of degree at most two.

Let be a quartic polynomial $p(t)=t^4 + a t^3 + b t^2 + c t + d$ over a number field $K$, and its factorization given by
$$
p(t)=(t-\alpha_1)(t-\alpha_2)(t-\alpha_3)(t-\alpha_4),
$$
over an algebraic closure $\overline{K}$. Then all the factorizations of $p(t)$ as product of two quadratic polynomials are of the form $p(t)=p_1(t)p_2(t)$ where:
$$
p_1(t)=(t^2-(\alpha_1+\alpha_2)t+\alpha_1\alpha_2)\quad\mbox{and}\quad p_2(t)=(t^2-(\alpha_3+\alpha_4)t+\alpha_3\alpha_4).
$$
There are three polynomials related to a quartic polynomial that are of great utility for the study of the Galois group of the quartic polynomial $p(t)$. These are the cubic resolvent of $p(t)$:
$$
r(t)=t^3 -b t^2 + (a c - 4 d) t -a^2 d + 4 b d - c^2,
$$
and if $\beta\in K$ is a root of $r(t)$, define 
$$
\begin{array}{lcl}
r_1(t)=t^2-\beta t+d, &\quad& \Delta_1=disc_t(r_1)=\beta^2-4 d,\\
r_2(t)=t^2+at+(b-\beta), &\quad& \Delta_2=disc_t(r_2)= 4 \beta+a^2-4 b.
\end{array}
$$

\begin{lemma}
If $\Delta_2\ne 0$ then $p_1(t),p_2(t)\in K(\sqrt{\Delta_2})[t]$. Otherwise, $p_1(t),p_2(t)\in K(\sqrt{\Delta_1})[t]$.
\end{lemma}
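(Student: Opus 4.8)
The plan is to use the classical theory of the resolvent cubic to realize the factorization explicitly and then read off the field of definition of the two quadratic factors. First I would recall the Ferrari-type identity: if $\beta$ is a root of the resolvent cubic $r(t)$, then
$$
p(t) = \left(t^2 + \tfrac{a}{2}t + \tfrac{\beta}{2}\right)^2 - \left(\Delta_2\,t^2 + (\text{linear in }t) + \Delta_1'\right),
$$
where the subtracted quadratic is a perfect square (this is precisely what makes $\beta$ a root of $r$). When $\Delta_2 \neq 0$, that square is $\Delta_2\bigl(t + \gamma\bigr)^2$ for some $\gamma$ expressible rationally in the coefficients of $p$ and in $\beta$, and the difference of squares factors as
$$
p(t) = \Bigl(t^2 + \tfrac{a}{2}t + \tfrac{\beta}{2} - \sqrt{\Delta_2}\,(t+\gamma)\Bigr)\Bigl(t^2 + \tfrac{a}{2}t + \tfrac{\beta}{2} + \sqrt{\Delta_2}\,(t+\gamma)\Bigr).
$$
Since $\beta \in K$, both factors lie in $K(\sqrt{\Delta_2})[t]$; a change of leading normalization makes them monic, giving the first claim. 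For the degenerate case $\Delta_2 = 0$, the perfect-square quadratic collapses to a constant, namely $\Delta_1$ (up to the bookkeeping in the definitions of $r_1, r_2$), and then $p(t) = \bigl(t^2 + \tfrac{a}{2}t + \tfrac{\beta}{2}\bigr)^2 - \Delta_1$ factors over $K(\sqrt{\Delta_1})$ as $\bigl(t^2 + \tfrac a2 t + \tfrac\beta2 - \sqrt{\Delta_1}\bigr)\bigl(t^2 + \tfrac a2 t + \tfrac\beta2 + \sqrt{\Delta_1}\bigr)$, which is the second claim.

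The key steps, in order, are: (i) verify the Ferrari identity, i.e. that subtracting $\bigl(t^2+\tfrac a2 t + \tfrac\beta2\bigr)^2$ from $p(t)$ produces a quadratic in $t$ whose discriminant-in-$t$ vanishes exactly when $r(\beta)=0$ — this is the one genuine computation, and it is what pins down $r(t)$ as the right resolvent; (ii) when $\Delta_2\neq 0$, identify the leading coefficient of that quadratic as $\Delta_2$ (a short expansion) and write it as $\Delta_2(t+\gamma)^2$, so the difference of squares factors; (iii) check the factors are in $K(\sqrt{\Delta_2})[t]$, which is immediate since $\Delta_2 \in K$, $\beta\in K$, $\gamma\in K(\beta)=K$; (iv) handle $\Delta_2=0$ separately, observing the subtracted term becomes the constant $\Delta_1$ and repeating (ii)-(iii) with $\sqrt{\Delta_1}$. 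One should also note that these two quadratic factors must be, up to order, the $p_1(t),p_2(t)$ of the form $\bigl(t^2-(\alpha_i+\alpha_j)t+\alpha_i\alpha_j\bigr)$ displayed before the lemma, since any factorization of $p$ into monic quadratics over $\overline K$ is of that shape and is determined by a partition of the roots into two pairs.

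I expect the main obstacle to be purely organizational rather than conceptual: matching the normalizations in the definitions of $r_1, r_2, \Delta_1, \Delta_2$ given in the text to the factors produced by Ferrari's method, so that the constant term in the $\Delta_2 = 0$ case comes out as exactly $\Delta_1 = \beta^2 - 4d$ (note $r_1(t) = t^2 - \beta t + d$ has this discriminant, which is the tell-tale sign that the author's $\Delta_1$ is tuned to the collapsed quadratic). Care is also needed about the case $\Delta_2 \neq 0$ but the factors still happening to lie in a smaller field — the lemma only claims containment in $K(\sqrt{\Delta_2})[t]$, not that this extension is minimal, so no extra argument is required there. The separability hypothesis on $p$ guarantees $\Delta_1$ and $\Delta_2$ are not simultaneously zero (else $p$ would have a repeated root), so the two cases are genuinely exhaustive.
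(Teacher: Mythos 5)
Your proof is correct, but it takes a genuinely different route from the paper's. You factor $p$ by Ferrari's method: complete the square as $p(t)=\bigl(t^2+\tfrac a2 t+\tfrac\beta2\bigr)^2-q(t)$, observe that $q$ has leading coefficient $\Delta_2/4$, constant term $\Delta_1/4$, and vanishing discriminant-in-$t$ exactly because $r(\beta)=0$ (one checks $(a\beta-2c)^2=\Delta_1\Delta_2$ is equivalent to $r(\beta)=0$), and then split $p$ as a difference of squares over $K(\sqrt{\Delta_2})$, resp.\ $K(\sqrt{\Delta_1})$ when $q$ collapses to the constant $\Delta_1/4$. The paper instead sets $\gamma=\alpha_1+\alpha_2-\alpha_3-\alpha_4$ (so $\gamma^2=\Delta_2$) and exhibits explicit polynomials $f,g\in K[x]$ with $\alpha_1+\alpha_2=f(\gamma)$, $\alpha_1\alpha_2=g(\gamma)$, $\alpha_3+\alpha_4=f(-\gamma)$, $\alpha_3\alpha_4=g(-\gamma)$, handling $\Delta_2=0$ separately via $\delta=(\alpha_2-\alpha_3)(\alpha_2-\alpha_4)$ with $\delta^2=\Delta_1$. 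Your argument is cleaner and self-contained, avoids the unexplained denominator $a^3-4ab+8c$ in the paper's $g$ (and the hidden assumption $a\ne 0$ in its degenerate case), and makes the role of the resolvent cubic transparent; the paper's version has the advantage that the explicit formulas $f$ and $g$ are reused later (e.g.\ in the remark computing $\phi^{-1}(-\delta_i,0)$). Your remaining gaps are only the flagged factor-of-$2$ normalizations ($q$ equals $\tfrac{\Delta_2}{4}(t+\gamma)^2$, and the constant is $\tfrac{\Delta_1}{4}$, so the split factors carry $\tfrac12\sqrt{\Delta_2}$ and $\tfrac12\sqrt{\Delta_1}$), which do not affect the field of definition, and the identification of the Ferrari factors with the pairing $\{\alpha_1,\alpha_2\},\{\alpha_3,\alpha_4\}$ attached to $\beta=\alpha_1\alpha_2+\alpha_3\alpha_4$, which you correctly dispose of by noting that every monic quadratic factorization over $\overline K$ arises from a partition of the roots.
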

\begin{proof}
First suppose $\Delta_2\ne 0$. Let be $\gamma=\alpha_1+\alpha_2-(\alpha_3+\alpha_4)$. Then $\gamma^2 = \Delta_2$. Define
$$
\begin{array}{l}
f(t)=\frac{1}{2}(t-a),\\[3mm]
g(t)=\frac{1}{8}(4 b - a^2+\frac{2(a^4 - 6 a^2 b + 8 b^2 + 4 a c -  32 d)}{a^3 - 4 a b + 8 c} x + x^2 - \frac{3 a^2 - 8 b}{a^3 - 4 a b + 8 c} x^3 + \frac{1}{a^3 - 4 a b + 8 c} x^5),
\end{array}
$$ 
then 
$$
\alpha_1+\alpha_2=f(\gamma), \quad \alpha_1\alpha_2=g(\gamma), \quad\alpha_3+\alpha_4=f(-\gamma), \quad\alpha_3\alpha_4=g(-\gamma).
$$
That is, $p_1(t),p_2(t)\in K(\gamma)[t]=K(\sqrt{\Delta_2})[t]$.

Now, assume $\Delta_2=0$ and let be $\delta=(\alpha_2-\alpha_3)(\alpha_2-\alpha_4)$. Then $\delta^2 = \Delta_1$ and we have
$$
\alpha_1+\alpha_2=\alpha_3+\alpha_4=-\frac{a}{2}, \quad \alpha_3\alpha_4=\frac{c}{a}+\frac{\delta}{2},\quad \alpha_1\alpha_2=\frac{d}{\alpha_3\alpha_4}.
$$ 
That is, $p_1(t),p_2(t)\in K(\delta)[t]=K(\sqrt{\Delta_1})[t]$.
\end{proof}

\begin{remark} There is a nice relationship between the elliptic curves defined by the quartic $p(t)$ and the cubic $-r(-x)$ such that the lemma above could be obtained. Let us denote by
$$
\begin{array}{l}
\displaystyle F\,:\,v^2=p(u)=u^4 + a u^3 + b u^2 + c u + d=\prod_{i=1}^4(u-\alpha_i).\\
\displaystyle E\,:\,y^2=-r(-x)=x^3 +b x^2 + (a c - 4 d) x +a^2 d - 4 b d + c^2 = \prod_{j=2}^4(x+\delta_j),
\end{array}
$$
$\delta_i=\alpha_1\alpha_i+\alpha_j\alpha_k$ such that $\{1,2,3,4\}=\{1,i,j,k\}$. Then, there exists an isomorphism $\phi:F\longrightarrow E$  defined over $\Q$. Now, let us denote $\gamma_i=\alpha_1+\alpha_i-\alpha_j-\alpha_k$ for $\{1,i,j,k\}=\{1,2,3,4\}$. Assume that $\gamma_i\ne 0$ for $i=2,3,4$, then we have
$$
\begin{array}{c}
\displaystyle \phi([1:1:0])=[0:1:0]\,,\quad \phi([1:-1:0])=\left(\frac{1}{4}s_1^2-s_2,\frac{1}{8}\delta_2\delta_3\delta_4  \right),\\
\displaystyle \phi(\alpha_i,0)=\left(\alpha_i\left(\alpha_i-\sum_{i\ne j}{\alpha_j}\right),\prod_{j\ne i}(\alpha_i-\alpha_j)\right) ,
\end{array}
$$
where $s_k$ denote the symmetric polynomial of degree $k$ on $\alpha_1,\dots,\alpha_4$. Moreover, $\phi(\alpha_i,0)=\phi(\alpha_1,0)+(-\delta_j,0)$ for $j=2,3,4$.

Now, for the inverse we have:
$$
\displaystyle \phi^{-1}(-\delta_i,0)=\left(\frac{g(\gamma_i)-g(-\gamma_i)}{\gamma_i},\frac{\displaystyle -\prod_{j\ne i}{(\delta_i-\delta_j)}}{\gamma_i^2}\right).
$$

Let us move the point $(-\delta_i,0)$ to $(0,0)$. We obtain a new Weierstrass equation $W_i\,:\,y^2=x(x^2+A_ix+B_i)$ where
$$
A_i=-2\delta_i+\sum_{j\ne i}{\delta_j}\qquad \mbox{and} \quad B_i=\prod_{j\ne i}{(\delta_i-\delta_j)}.
$$
If we denote by $\psi_i$ the isomorphism between $F$ and $W_i$ and by $(x_i,y_i)=\psi^{-1}(0,0)$ we obtain the equalities
$$
\frac{-B_i}{y_i}=\gamma_i^2=disc_t(t^2+at+(b-\delta_i)),
$$
the second one coming from the lemma above.

Finally, let us assume that $\gamma_i=0$ for some $i\in\{2,3,4\}$. For simplicity, let $i=2$. In this particular case we have:
$$
\begin{array}{lcl}
\phi([1:1:0])=[0:1:0]\,, & & \phi([1:-1:0])=(-\delta_2,0)\,,\\
\displaystyle \phi(\alpha_1,0)=\left(-2\alpha_1\alpha_2,-\prod_{j\ne 2}(\alpha_2-\alpha_j)\right)\,, & &  \phi(\alpha_2,0)=-\phi(\alpha_1,0)\,,\\
\displaystyle \phi(\alpha_3,0)=\left(-2\alpha_3\alpha_4,-\prod_{j>k,k\ne 1}(\alpha_k-\alpha_j)\right)\,, & &  \phi(\alpha_4,0)=-\phi(\alpha_3,0)\,,\\
\end{array}
$$
and for the inverse
$$
\begin{array}{ll}
\phi^{-1}(-\delta_2,0)=[1:-1:0]\,,&\\
\displaystyle\phi^{-1}(-\delta_3,0)=\left(\frac{\alpha_3+\alpha_4}{2},\frac{1}{4}(\alpha_2-\alpha_1)(\alpha_3-\alpha_4)\right),  & \phi^{-1}(-\delta_4,0)=-\phi^{-1}(-\delta_3,0).
\end{array}
$$
Now move the point $(-\delta_2,0)$ to $(0,0)$ and obtain a new Weierstrass equation $W_2\,:\,y^2=x(x^2+A_2x+B_2)$ where
$$
A_2=\alpha_3^2+\alpha_4^2-2\alpha_1\alpha_2\qquad \mbox{and} \quad B_2=(\alpha_2-\alpha_3)^2(\alpha_2-\alpha_4)^2.
$$
Then if we denote by $\psi_2$ the isomorphism between $F$ and $W_2$ and by $[x_2:y_2:z_2]=\psi^{-1}(0,0)=[1:-1:0]$ we obtain the equalities
$$
\frac{-B_2}{y_2}=(\alpha_2-\alpha_3)^2(\alpha_2-\alpha_4)^2=disc_t(t^2-\delta_2t +d),
$$
the second one coming from the lemma above.
\end{remark}

\subsection{Diagonal genus $1$ curve}\label{sec_diagona_genus1}
%
%
%

Let $K$ be a number field and $a,b,c,d\in K$ such that $ad-bc\ne 0$. Then the matrix 
$$\mathcal{R}=\begin{pmatrix}a & b \\c & d\end{pmatrix}$$
defines the genus $1$ curve $C$ (that we call diagonal) given by  the intersection of the following two quadrics in $\mathbb P^3$:
\begin{equation}\label{diagonal1}
C\,:\,\left\{\begin{array}{c}aX_0^2+bX_1^2=X_2^2\\ cX_0^2+dX_1^2=X_3^2\end{array}\right\}. 
\end{equation}
Suppose that there exists  $P_0=[ x_0: x_1: x_2: x_3] \in C(K)$, then $C$ is an elliptic curve and it has a Weierstrass equation. Parametrizing the first conic of $C$ by the point $P_0$ obtaining
$$
\begin{array}{rcl}
[X_0:X_1:X_2]&=&[-a\,b\,x_0\, x_3^4-2\, b\, x_1\, x_3^2 t+x_0\, t^2 :\\
& & \qquad a\, b\, x_1\, x_3^4-2\, a\, x_0\, x_3^2\, t-x_1\, t^2 :  x_2\, (a\, b\, x_3^4+t^2)]
\end{array}
$$
with inverse given by $t=\frac{b (x_1 X_2 - X_1 x_2) x_3^2}{x_0 X_2 - X_0 x_2}$. Next, we substitute $X_0,X_1,X_2$ in
the second equation and we obtain the quartic $F:z^2=p_{\mathcal R}(t)$, where 
\begin{equation}\label{eq_quartic_diagonal_genus1}
\begin{array}{l}
p_{\mathcal{R}}(t)=p(t)=t^4+4(ad-bc)x_0x_1t^3+2(2(a^2dx_0^2+b^2cx_1^2)-abx_3^2))x_3^2t^2-\\ \qquad\qquad\qquad\quad-4ab(ad-bc)x_0x_1x_3^4t+a^2b^2x_3^8,
\end{array}
\end{equation}
and $(x_3 X_3)^2=p(t)$. Then the quartic genus $1$ curve $F$ has the Weierstrass equation:
$$
E\,:\,y^2=x(x+a\,d)(x+b\,c).
$$

The trivial points $[\pm x_0: \pm x_1:\pm x_2:\pm x_3] \subseteq {C}(K)$ goes to $\{Q_i\,:\,i=0\dots 7\}\subseteq F(K)$ and then to $\{P_i\,:\,i=0\dots 7\}\subseteq E(K)$:\\
$$
\begin{array}{|c|c|c|c|}
\hline
i & T_i &  Q_i & P_i \\[1mm]
\hline
0 & [++++]  & [0:1:0]  & \cO:=[0:1:0] \\[1mm]
\hline
1 & [--++]   & (0,a\,b\,x_3^4) & (0,0)\\[1mm]
\hline
2 & [-++-]   & \left(-a\,\frac{x_0x_3^2}{x_1},-a\,\frac{x_2^2 x_3^4}{x_1^2}\right) & (-b\,c,0 )\\[1mm]
\hline
3 & [-+-+]  & \left(b\,\frac{x_1x_3^2}{x_0},-b\,\frac{x_2^2 x_3^4}{x_0^2}\right) & (-a\,d,0 )\\[1mm]
\hline
4 & [++-+]   & (0,-a\,b\,x_3^4)& \left(-a\,b\frac{x_3^2}{x_2^2},a\,b(a\,d-b\,c)\frac{x_0 x_1 x_3}{x_2^3}\right)\\[1mm]
\hline
5 & [+-++]   & \left(b\,\frac{x_1x_3^2}{x_0},b\,\frac{x_2^2 x_3^4}{x_0^2}\right) & \left(b\,d\frac{x_1^2}{x_0^2},-b\,d\frac{x_1 x_2 x_3}{x_0^3}\right)\\[1mm]
\hline
6 & [-+++]   & \left(-a\,\frac{x_0 x_3^2}{x_1},a\,\frac{x_2^2 x_3^4}{x_1^2}\right) & \left(a\,c\frac{x_0^2}{x_1^2},a\,c\frac{x_0 x_2 x_3}{x_1^3}\right) \\[1mm]
\hline
7 & [+++-]   & [1:-1:0] & \left(-c\,d\frac{x_2^2}{x_3^2},-c\,d(a\,d-b\,c)\frac{x_0 x_1 x_2}{x_3^3}\right)\\[1mm]
\hline
\end{array}
$$

\

Note that the set $\{P_i\,:\,i=0\dots 7\}$ is generated by $P_2,P_3,P_4$ and, in particular, $\Z/2\Z\otimes\Z/2\Z\oplus\langle P_4\rangle$ is a subgroup of $E(K)$. Therefore, the rank of the Mordell-Weil group of $E(K)$ is, in general, non-zero.

\

Now, in section \ref{sec_fact} we have described a method to factorize a quartic polynomial as the product of two quadratic polynomials over a quadratic field. Applying this method to the polynomial $p(t)$ we obtain the factorization $p(t)=p_{i+}(t){p_{i-}}(t)$ over $\Q(\alpha_i)$ corresponding to the $2$-torsion point $P_i$, for $i=1,2,3$:
$$
\begin{array}{ll}
\begin{array}{l}
p_{1+}(t)=t^2+2((a\,d-b\,c)x_0x_1-x_2^2\alpha_1)t-a\,bx_3^4,
\end{array}  & \alpha_1=\sqrt{-c\,d}\\[2mm]
\begin{array}{l}
p_{2+}(t)=t^2+2\,x_0((a\,d-b\,c)x_1-x_2\alpha_2 )t\\[1mm]
\qquad\quad\quad\quad +\,b x_0^2(a\,cx_0^2+(2\,b\,c-a\,d)x_1^2+2x_1r_0\alpha_2),
\end{array}  & \alpha_2=\sqrt{-c(a\,d-b\,c)}\\\\[2mm]
\begin{array}{l}p_{3+}(t)=t^2+2x_1((a\,d-b\,c)x_0-x_2\alpha_3)t\\[1mm]
\qquad\quad\quad\quad+\,ax_3^2(b\,d\,x_1^2+(2\,a\,d-b\,c)x_0^2-2\,x_0 x_2\alpha_3), 
\end{array}
 & \alpha_3=\sqrt{d(a\,d-b\,c)}
\end{array}
$$
and ${p_{i-}}(t)$ is obtained replacing $\alpha_i$ by $-\alpha_i$ on $p_i(t)$.

\

{\bf Acknowledgements:} 
We would like to thank to Xavier Xarles by his continuous inspiration, without his ideas this paper would have not been possible; and José M. Tornero, who read the earlier versions of this paper carefully. Finally, the author thanks the anonymous referee for useful comments.

\end{document}